\newtheorem{theorem}{Theorem}
\theoremstyle{plain}
\newtheorem{corollary}{Corollary}
\newtheorem{definition}{Definition}
\newtheorem{lemma}{Lemma}
\numberwithin{equation}{section}
\begin{document}
\title[Inequalities for Godunova-Levin Class Functions]{Integral
Inequalities for functions whose 3rd derivatives belong to $Q(I)$}
\author{M.Emin Ozdemir$^{\blacklozenge }$}
\address{$^{\blacklozenge }$ATATURK UNIVERSITY, K.K. EDUCATION FACULTY,
DEPARTMENT OF MATHEMATICS, 25240 CAMPUS, ERZURUM, TURKEY}
\email{emos@atauni.edu.tr}
\author{Merve Avci Ardic$^{\ast ,\diamondsuit }$}
\thanks{$^{\diamondsuit }$Corresponding Author}
\address{$^{\ast }$ADIYAMAN UNIVERSITY, FACULTY OF SCIENCE AND ART,
DEPARTMENT OF MATHEMATICS, 02040, ADIYAMAN, TURKEY}
\email{mavci@posta.adiyama.edu.tr}
\author{Mustafa Gurbuz$^{\blacktriangle }$}
\address{$^{\blacktriangle }$A\u{g}r\i\ \.{I}brahim \c{C}e\c{c}en
University, Faculty of Education, Department of Mathematics, 04100, A\u{g}%
r\i , Turkey}
\email{mgurbuz@agri.edu.tr}
\subjclass[2000]{Primary 05C38, 15A15; Secondary 05A15, 15A18}
\keywords{Godunova-Levin Class Functions, Power-mean integral inequality, H%
\"{o}lder inequality, Hermite-Hadamard inequality, Simpson inequality}

\begin{abstract}
In this paper, we obtain some new inequalities of Hermite-Hadamard type and
Simpson type for functions whose third derivatives belong to Godunova-Levin
class.
\end{abstract}

\maketitle

\section{introduction}

Following inequalities are well known in the literature as Hermite-Hadamard
inequality and Simpson inequality respectively:

\begin{theorem}
\label{teo 1.1} Let $f:I\subseteq 
\mathbb{R}
\rightarrow 
\mathbb{R}
$be a convex function on the interval $I$ of real numbers and $a,b\in I$
with $a<b.$Then, the following double inequality holds 
\begin{equation*}
f\left( \frac{a+b}{2}\right) \leq \frac{1}{b-a}\int_{a}^{b}f(x)dx\leq \frac{%
f(a)+f(b)}{2}.
\end{equation*}
\end{theorem}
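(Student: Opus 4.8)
The plan is to establish the two inequalities separately, in each case using a single application of the definition of convexity followed by an integration over $[0,1]$ after an affine change of variables. First I would normalise the integral: parametrising $[a,b]$ by $x=(1-t)a+tb$ with $t\in[0,1]$ gives $dx=(b-a)\,dt$, hence $\frac{1}{b-a}\int_a^b f(x)\,dx=\int_0^1 f\big((1-t)a+tb\big)\,dt$. This identity, together with the substitution $t\mapsto 1-t$, will be the only bookkeeping needed.

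For the right-hand inequality, convexity of $f$ yields the pointwise bound $f\big((1-t)a+tb\big)\le (1-t)f(a)+tf(b)$ for all $t\in[0,1]$. Integrating this over $t\in[0,1]$ and using $\int_0^1(1-t)\,dt=\int_0^1 t\,dt=\frac{1}{2}$ gives $\frac{1}{b-a}\int_a^b f(x)\,dx\le \frac{f(a)+f(b)}{2}$. For the left-hand inequality I would exploit the symmetry of the midpoint: since $\frac{a+b}{2}=\frac{1}{2}\big[(1-t)a+tb\big]+\frac{1}{2}\big[ta+(1-t)b\big]$ for every $t\in[0,1]$, convexity gives
\[
f\left(\frac{a+b}{2}\right)\le \frac{1}{2}\,f\big((1-t)a+tb\big)+\frac{1}{2}\,f\big(ta+(1-t)b\big).
\]
Integrating over $t\in[0,1]$ and observing that both terms on the right integrate to $\int_0^1 f\big((1-t)a+tb\big)\,dt=\frac{1}{b-a}\int_a^b f(x)\,dx$ (the second after $t\mapsto 1-t$) produces the asserted lower bound.

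I do not expect a serious obstacle here, since each half reduces to integrating an elementary scalar inequality; the only point worth a remark is that the integral $\int_a^b f(x)\,dx$ is well defined, which follows because a convex function on an interval is continuous on its interior and bounded on any compact subinterval, hence Riemann integrable on $[a,b]$. If one wished an alternative route, the lower bound also follows from a supporting-line argument (integrating the tangent-type inequality $f(x)\ge f(\tfrac{a+b}{2})+\lambda(x-\tfrac{a+b}{2})$ for a suitable subgradient $\lambda$) and the upper bound from integrating the chord inequality, but the symmetric averaging argument above is cleaner and avoids invoking subdifferentials.
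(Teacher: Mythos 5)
Your argument is correct: the right-hand bound follows from integrating the chord inequality $f((1-t)a+tb)\le(1-t)f(a)+tf(b)$ over $t\in[0,1]$, and the left-hand bound from integrating the symmetric estimate $f\left(\frac{a+b}{2}\right)\le\frac{1}{2}f((1-t)a+tb)+\frac{1}{2}f(ta+(1-t)b)$, with the change of variables $t\mapsto 1-t$ identifying the two integrals; your remark on integrability of a convex function is also sound. Note that the paper offers no proof of this statement at all --- it is quoted as the classical Hermite--Hadamard inequality and used only as background --- so there is nothing to compare against; what you have written is the standard textbook derivation and is complete.
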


\begin{theorem}
\label{teo 1.2} Let $f:\left[ a,b\right] \rightarrow 
\mathbb{R}
$ be a four times continuously differentiable mapping on $\left( a,b\right) $
and $\left\Vert f^{(4)}\right\Vert _{\infty }=\sup\limits_{x\in \left(
a,b\right) }\left\vert f^{(4)}\left( x\right) \right\vert <\infty .$ Then,
the following inequality holds:%
\begin{equation*}
\left\vert \frac{1}{3}\left[ \frac{f(a)+f\left( b\right) }{2}+2f\left( \frac{%
a+b}{2}\right) \right] -\frac{1}{b-a}\dint\limits_{a}^{b}f(x)dx\right\vert
\leq \frac{1}{2880}\left\Vert f^{(4)}\right\Vert _{\infty }\left( b-a\right)
^{4}.
\end{equation*}
\end{theorem}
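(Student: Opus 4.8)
The plan is to represent the left-hand side as a single integral of $f^{(4)}$ against a fixed Peano kernel and then estimate that integral. Write
\[
E(f):=\frac{1}{3}\left[ \frac{f(a)+f(b)}{2}+2f\left( \frac{a+b}{2}\right) \right] -\frac{1}{b-a}\int_{a}^{b}f(x)\,dx ,
\]
so the quantity to be bounded is $\left\vert E(f)\right\vert $. A direct check on $f(x)=1,x,x^{2},x^{3}$ shows that $E$ annihilates every polynomial of degree at most $3$ — this is just the classical fact that Simpson's rule integrates cubics exactly. The first step is then to prove an identity
\[
E(f)=\int_{a}^{b}K(t)\,f^{(4)}(t)\,dt ,
\]
where $K$ is the Peano kernel $K(t)=\tfrac{1}{6}\,E_{x}\!\left[ (x-t)_{+}^{3}\right] $ (the subscript meaning that $E$ acts on the variable $x$, and $(x-t)_{+}=\max\{x-t,0\}$); equivalently, $K$ is obtained by taking three successive antiderivatives, each vanishing at $a$, of the known first-derivative Simpson kernel, so that $K$ is a $C^{2}$ piecewise quartic with $K(a)=K(b)=0$. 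The identity itself comes from four integrations by parts: at each stage the boundary contribution at $a$ and $b$ drops out precisely because $E$ kills the polynomial of the corresponding degree, so nothing involving $f,f',f'',f'''$ survives on the right.

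The technical heart is to show that $K$ keeps a constant sign on $[a,b]$. Because $E$ is invariant under reflection about the midpoint, $K$ is symmetric about $\tfrac{a+b}{2}$, so it suffices to examine $K$ on $\left[ a,\tfrac{a+b}{2}\right] $, where one checks directly from the explicit quartic expression — via its leading coefficient together with the values and derivatives at the two endpoints of the half-interval — that $K$ does not change sign. Granting sign-definiteness,
\[
\left\vert E(f)\right\vert =\left\vert \int_{a}^{b}K(t)\,f^{(4)}(t)\,dt\right\vert \leq \left\Vert f^{(4)}\right\Vert _{\infty }\int_{a}^{b}\left\vert K(t)\right\vert \,dt=\left\Vert f^{(4)}\right\Vert _{\infty }\left\vert \int_{a}^{b}K(t)\,dt\right\vert .
\]

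It then remains to evaluate $\int_{a}^{b}K(t)\,dt$, and the point is that this needs no knowledge of $K$ itself. Integrating the Peano representation and using $\int_{a}^{b}(x-t)_{+}^{3}\,dt=\tfrac{1}{4}(x-a)^{4}$ gives
\[
\int_{a}^{b}K(t)\,dt=\tfrac{1}{6}\,E_{x}\!\left[ \int_{a}^{b}(x-t)_{+}^{3}\,dt\right] =\tfrac{1}{24}\,E\!\left( x\mapsto (x-a)^{4}\right) ,
\]
and since $E$ vanishes on cubics, $E((x-a)^{4})=E(x^{4})=E\!\left( (x-\tfrac{a+b}{2})^{4}\right) $. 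Evaluating $E$ on the symmetric quartic $g(x)=\left( x-\tfrac{a+b}{2}\right) ^{4}$ — for which $g(a)=g(b)=\left( \tfrac{b-a}{2}\right) ^{4}$, $g\!\left( \tfrac{a+b}{2}\right) =0$ and $\tfrac{1}{b-a}\int_{a}^{b}g=\tfrac{1}{5}\left( \tfrac{b-a}{2}\right) ^{4}$ — yields
\[
E(g)=\left( \tfrac{1}{3}-\tfrac{1}{5}\right) \left( \tfrac{b-a}{2}\right) ^{4}=\frac{(b-a)^{4}}{120},
\]
hence $\left\vert \int_{a}^{b}K(t)\,dt\right\vert =\frac{(b-a)^{4}}{2880}$. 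Combining this with the previous estimate gives exactly
\[
\left\vert E(f)\right\vert \leq \frac{1}{2880}\left\Vert f^{(4)}\right\Vert _{\infty }(b-a)^{4},
\]
which is the assertion; if desired, the same sign-definiteness together with the mean value theorem for integrals even upgrades this to $E(f)=\tfrac{1}{2880}f^{(4)}(\xi)(b-a)^{4}$ for some $\xi\in(a,b)$. The only nontrivial step is the sign-definiteness of $K$; the integration-by-parts identity is routine bookkeeping and the constant $\tfrac{1}{2880}$ simply falls out of the short closed-form computation above.
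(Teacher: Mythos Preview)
The paper does not prove this theorem at all: it is quoted in the introduction as the classical Simpson inequality, a well-known result cited only to motivate the Simpson-type estimates that come later. There is therefore no ``paper's own proof'' to compare your argument against.

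That said, your Peano-kernel approach is the standard route to the classical bound and the outline is correct. The exactness of Simpson's rule on cubics gives the representation $E(f)=\int_a^b K(t)\,f^{(4)}(t)\,dt$, and your computation of $\int_a^b K(t)\,dt$ via $E$ applied to $(x-\tfrac{a+b}{2})^4$ is clean and yields the right constant $1/2880$. The one place where you defer the work is the sign-definiteness of $K$: you assert that ``one checks directly from the explicit quartic expression'' without actually carrying this out. The claim is true (on $[a,\tfrac{a+b}{2}]$ the kernel is, up to a positive factor, $\tfrac{1}{6}\bigl[\tfrac13\bigl(\tfrac{(b-t)^3}{2}+2(c-t)^3\bigr)-\tfrac{(b-t)^4}{4(b-a)}\bigr]$ with $c=\tfrac{a+b}{2}$, and one can verify it is non-positive there), but in a self-contained write-up you would want to supply that short calculation rather than leave it as an exercise.
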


In 1985, E. K. Godunova and V. I. Levin introduced the following class of
functions (see \cite{GL}):

\begin{definition}
\label{def 1.1} A map $f:I\rightarrow 
\mathbb{R}
$ is said to belong to the class $Q(I)$ if it is nonnegative and for all $%
x,y\in I$ and $\lambda \in \left( 0,1\right) ,$ satisfies the inequality%
\begin{equation*}
f(\lambda x+(1-\lambda )y)\leq \frac{f(x)}{\lambda }+\frac{f(y)}{1-\lambda }.
\end{equation*}
\end{definition}

In \cite{MK}, Moslehian and Kian obtained Hermite-Hadamard and Ostrowski
type inequalities for functions whose first derivatives belong to $Q(I).$

To obtain our new results, it is necessary two lemmas .

\begin{lemma}
\label{lem 1.1} \cite{CQ} Let $f:I\subseteq 
\mathbb{R}
\rightarrow 
\mathbb{R}
$ be a three times differentiable function on $I^{\circ }$ with $a,b\in I$, $%
a<b.$ If $f^{\prime \prime \prime }\in L[a,b],$ then%
\begin{eqnarray*}
&&\frac{f(a)+f(b)}{2}-\frac{1}{b-a}\int_{a}^{b}f(x)dx-\frac{b-a}{12}\left[
f^{\prime }(b)-f^{\prime }(a)\right] \\
&=&\frac{\left( b-a\right) ^{3}}{12}\int_{0}^{1}t\left( 1-t\right) \left(
2t-1\right) f^{\prime \prime \prime }\left( ta+(1-t)b\right) dt.
\end{eqnarray*}
\end{lemma}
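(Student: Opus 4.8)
The plan is to verify this identity by three successive integrations by parts applied to the right-hand side. Write $p(t)=t(1-t)(2t-1)=-2t^{3}+3t^{2}-t$ and record the values $p(0)=p(1)=0$, $p'(t)=-6t^{2}+6t-1$ with $p'(0)=p'(1)=-1$, $p''(t)=-12t+6$ with $p''(0)=6$, $p''(1)=-6$, and $p'''(t)=-12$. The single observation that drives everything is that, for each $k$, $\frac{d}{dt}f^{(k)}(ta+(1-t)b)=(a-b)f^{(k+1)}(ta+(1-t)b)=-(b-a)f^{(k+1)}(ta+(1-t)b)$, so every time we move a derivative from $f^{(k)}$ onto the polynomial factor we pay a factor of $-\frac{1}{b-a}$.

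First I would integrate by parts with $u=p(t)$ and $dv=f'''(ta+(1-t)b)\,dt$; because $p$ vanishes at both endpoints the boundary term disappears and the integral becomes $\frac{1}{b-a}\int_{0}^{1}p'(t)f''(ta+(1-t)b)\,dt$. A second integration by parts, now against $f''$, produces the boundary term $p'(1)f'(a)-p'(0)f'(b)=f'(b)-f'(a)$ (this will become the $-\frac{b-a}{12}[f'(b)-f'(a)]$ term once the overall constant is restored) plus the integral $\int_{0}^{1}p''(t)f'(ta+(1-t)b)\,dt$. A third integration by parts, against $f'$, yields the boundary term coming from $p''$, namely $p''(1)f(a)-p''(0)f(b)=-6(f(a)+f(b))$, together with $\int_{0}^{1}p'''(t)f(ta+(1-t)b)\,dt=-12\int_{0}^{1}f(ta+(1-t)b)\,dt$, and the substitution $x=ta+(1-t)b$ turns this last integral into $\frac{1}{b-a}\int_{a}^{b}f(x)\,dx$.

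Collecting the three boundary contributions and the final integral, one gets that $\int_{0}^{1}p(t)f'''(ta+(1-t)b)\,dt$ equals a linear combination of $f(a)+f(b)$, $f'(b)-f'(a)$, and $\frac{1}{b-a}\int_{a}^{b}f(x)\,dx$ with explicit powers of $b-a$; multiplying through by $\frac{(b-a)^{3}}{12}$ and simplifying recovers exactly $\frac{f(a)+f(b)}{2}-\frac{1}{b-a}\int_{a}^{b}f(x)\,dx-\frac{b-a}{12}[f'(b)-f'(a)]$. The only real care needed is clerical: correctly tracking the sign $-(b-a)$ generated at each differentiation step and the values of $p,p',p''$ at $0$ and $1$. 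There is no analytic obstacle, since the hypothesis $f'''\in L[a,b]$ is precisely what legitimizes the three integrations by parts.
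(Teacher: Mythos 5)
Your proposal is correct and follows the standard route: the paper itself offers no proof of this lemma (it is quoted from \cite{CQ}), and the derivation there is exactly the triple integration by parts you describe. Your bookkeeping checks out: with $p(0)=p(1)=0$, $p'(0)=p'(1)=-1$, $p''(0)=6$, $p''(1)=-6$, $p'''\equiv-12$, and a factor $-\tfrac{1}{b-a}$ accrued at each step, one gets $\int_{0}^{1}p(t)f'''(ta+(1-t)b)\,dt=-\tfrac{f'(b)-f'(a)}{(b-a)^{2}}+\tfrac{6\left(f(a)+f(b)\right)}{(b-a)^{3}}-\tfrac{12}{(b-a)^{4}}\int_{a}^{b}f(x)\,dx$, which upon multiplication by $\tfrac{(b-a)^{3}}{12}$ is precisely the stated left-hand side.
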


\begin{lemma}
\label{lem 1.2} \cite{AH} Let $f^{\prime \prime }:I\subseteq 
\mathbb{R}
\rightarrow 
\mathbb{R}
$ be an absolutely continuous function on $I^{\circ }$ such that $f^{\prime
\prime \prime }\in L[a,b],$ where $a,b\in I$, $a<b.$ Then 
\begin{eqnarray*}
&&\int_{a}^{b}f(x)dx-\frac{b-a}{6}\left[ f(a)+4f\left( \frac{a+b}{2}\right)
+f(b)\right] \\
&=&\left( b-a\right) ^{4}\int_{0}^{1}p(t)f^{\prime \prime \prime }\left(
ta+(1-t)b\right) dt,
\end{eqnarray*}%
where%
\begin{equation*}
p(t)=\left\{ 
\begin{array}{ccc}
\frac{1}{6}t^{2}\left( t-\frac{1}{2}\right) , &  & \text{if }t\in \left[ 0,%
\frac{1}{2}\right] \\ 
&  &  \\ 
\frac{1}{6}\left( t-1\right) ^{2}\left( t-\frac{1}{2}\right) , &  & \text{if 
}t\in \left( \frac{1}{2},1\right] .%
\end{array}%
\right.
\end{equation*}
\end{lemma}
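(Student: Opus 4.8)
The plan is to establish the identity directly, by evaluating the right-hand side integral through three successive integrations by parts and exploiting the fact that $p$ is a piecewise cubic. Set $g(t):=f(ta+(1-t)b)$, so that $g^{(k)}(t)=(a-b)^{k}f^{(k)}(ta+(1-t)b)$; in particular $f'''(ta+(1-t)b)=g'''(t)/(a-b)^{3}$. Since $(a-b)^{3}=-(b-a)^{3}$, the quantity on the right of the lemma equals
\[
(b-a)^{4}\int_{0}^{1}p(t)f'''(ta+(1-t)b)\,dt=-(b-a)\int_{0}^{1}p(t)\,g'''(t)\,dt ,
\]
so it suffices to show that $-(b-a)\int_{0}^{1}p\,g'''\,dt$ equals the left-hand side of the lemma.

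First I would record the boundary data of $p$: an easy computation gives $p(0)=p(1/2)=p(1)=0$ and $p'(0)=p'(1)=0$; moreover $p'$ is continuous at $t=1/2$, whereas $p''$ has a jump there, with $p''(0)=-1/6$, $p''(1/2^{-})=1/3$, $p''(1/2^{+})=-1/3$, $p''(1)=1/6$, and $p'''\equiv 1$ on each of the two branches. Now integrate $\int_{0}^{1}p\,g'''\,dt$ by parts. The first step annihilates the boundary term since $p$ vanishes at $0$ and $1$, leaving $-\int_{0}^{1}p'g''\,dt$; the second step again annihilates the boundary term since $p'$ vanishes at $0$ and $1$, leaving $\int_{0}^{1}p''g'\,dt$. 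For the third step one must split at the interior breakpoint, writing $\int_{0}^{1}p''g'=\int_{0}^{1/2}p''g'+\int_{1/2}^{1}p''g'$, and integrate each piece by parts using $p'''\equiv 1$; collecting the one-sided values of $p''$ at $0$, $1/2$, $1$ gives
\[
\int_{0}^{1}p(t)\,g'''(t)\,dt=\frac{1}{6}\,g(0)+\frac{2}{3}\,g(1/2)+\frac{1}{6}\,g(1)-\int_{0}^{1}g(t)\,dt .
\]

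Finally I would substitute back: $g(0)=f(b)$, $g(1)=f(a)$, $g(1/2)=f\!\left(\frac{a+b}{2}\right)$, and the change of variable $x=ta+(1-t)b$ yields $\int_{0}^{1}g(t)\,dt=\frac{1}{b-a}\int_{a}^{b}f(x)\,dx$. Multiplying the last display by $-(b-a)$ converts it into $\int_{a}^{b}f(x)\,dx-\frac{b-a}{6}\big[f(a)+4f(\frac{a+b}{2})+f(b)\big]$, which is precisely the claimed identity. I expect the only genuinely delicate point to be this third integration by parts: one has to split the integral at $t=1/2$ before integrating by parts (or, equivalently, carry the jump of $p''$ as an explicit correction term), and it is exactly this step that produces the coefficient $4$ in front of $f(\frac{a+b}{2})$ and the denominator $6$; everything else is routine polynomial bookkeeping. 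As an alternative, one could perform the three integrations by parts separately on $[0,1/2]$ and $[1/2,1]$ from the outset and simply check term by term that the sum collapses to the left-hand side.
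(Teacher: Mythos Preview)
Your argument is correct: the successive integrations by parts are set up properly, the boundary values of $p$, $p'$, $p''$ that you record are all right, and the jump of $p''$ at $t=1/2$ is exactly what produces the midpoint term with coefficient $4/6$. The final substitution back to $f$ is also handled correctly.

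There is nothing to compare against, however: the paper does not supply its own proof of this lemma. It is quoted verbatim from the cited reference~\cite{AH} (Alomari and Hussain) and used as a black box in the proof of Theorem~\ref{teo 2.4}. Your integration-by-parts derivation is precisely the standard way such Peano-kernel identities are established, so it is almost certainly the same argument as in the original source; in any case it is complete and self-contained.
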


In this paper, using Lemma \ref{lem 1.1} and Lemma \ref{lem 1.2}, we obtain
some new inequalities for functions whose third derivatives belong to $Q(I).$

\section{Main Results}

We obtain the following new inequalities via Lemma \ref{lem 1.1}.

\begin{theorem}
\label{teo 2.1} Let $f^{\prime \prime }:I\subseteq 
\mathbb{R}
\rightarrow 
\mathbb{R}
$ be an absolutely continuous function on $I^{\circ }$ such that $f^{\prime
\prime \prime }\in L[a,b],$ where $a,b\in I$, $a<b.$ If $\left\vert
f^{\prime \prime \prime }\right\vert ^{q}$ belongs to $Q(I),$ then the
following inequality holds%
\begin{eqnarray*}
&&\left\vert \frac{f(a)+f(b)}{2}-\frac{1}{b-a}\int_{a}^{b}f(x)dx-\frac{b-a}{%
12}\left[ f^{\prime }(b)-f^{\prime }(a)\right] \right\vert \\
&\leq &\frac{\left( b-a\right) ^{3}}{12}\left( \frac{1}{16}\right) ^{1-\frac{%
1}{q}}\left[ \frac{\left\vert f^{\prime \prime \prime }(a)\right\vert
^{q}+\left\vert f^{\prime \prime \prime }(b)\right\vert ^{q}}{4}\right] ^{%
\frac{1}{q}}
\end{eqnarray*}%
for $q\geq 1.$
\end{theorem}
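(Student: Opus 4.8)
The plan is to start from the exact identity furnished by Lemma~\ref{lem 1.1}, take absolute values of both sides, and move the modulus inside the integral, which gives
\begin{equation*}
\left\vert \frac{f(a)+f(b)}{2}-\frac{1}{b-a}\int_{a}^{b}f(x)\,dx-\frac{b-a}{12}\left[ f^{\prime }(b)-f^{\prime }(a)\right] \right\vert \leq \frac{(b-a)^{3}}{12}\int_{0}^{1}\left\vert t(1-t)(2t-1)\right\vert \left\vert f^{\prime \prime \prime }(ta+(1-t)b)\right\vert \,dt .
\end{equation*}
To the integral on the right I would apply the weighted power-mean inequality for $q\geq 1$ (equivalently, H\"older applied to $w^{1-1/q}$ and $w^{1/q}\left\vert f^{\prime\prime\prime}\right\vert$ with the weight $w(t)=\left\vert t(1-t)(2t-1)\right\vert$), which peels off a factor $\left(\int_{0}^{1}\left\vert t(1-t)(2t-1)\right\vert \,dt\right)^{1-1/q}$ multiplied by $\left(\int_{0}^{1}\left\vert t(1-t)(2t-1)\right\vert \left\vert f^{\prime\prime\prime}(ta+(1-t)b)\right\vert^{q}\,dt\right)^{1/q}$.

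Next I would evaluate the purely numerical factor: since $t(1-t)(2t-1)$ is antisymmetric about $t=\tfrac12$ and nonnegative on $[\tfrac12,1]$, one has $\int_{0}^{1}\left\vert t(1-t)(2t-1)\right\vert \,dt=2\int_{1/2}^{1}t(1-t)(2t-1)\,dt=\tfrac{1}{16}$, producing the constant $(1/16)^{1-1/q}$. For the second factor I would invoke the hypothesis $\left\vert f^{\prime\prime\prime}\right\vert^{q}\in Q(I)$ with $x=a$, $y=b$, $\lambda=t$, so that $\left\vert f^{\prime\prime\prime}(ta+(1-t)b)\right\vert^{q}\leq \frac{\left\vert f^{\prime\prime\prime}(a)\right\vert^{q}}{t}+\frac{\left\vert f^{\prime\prime\prime}(b)\right\vert^{q}}{1-t}$; substituting this bound, the integral splits into $\left\vert f^{\prime\prime\prime}(a)\right\vert^{q}\int_{0}^{1}\frac{\left\vert t(1-t)(2t-1)\right\vert}{t}\,dt+\left\vert f^{\prime\prime\prime}(b)\right\vert^{q}\int_{0}^{1}\frac{\left\vert t(1-t)(2t-1)\right\vert}{1-t}\,dt$. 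Because the cubic weight has simple zeros at $0$ and at $1$, the factors $t$ and $1-t$ cancel and a short symmetric computation gives $\int_{0}^{1}(1-t)\left\vert 2t-1\right\vert \,dt=\int_{0}^{1}t\left\vert 2t-1\right\vert \,dt=\tfrac14$, so the second factor is at most $\left(\frac{\left\vert f^{\prime\prime\prime}(a)\right\vert^{q}+\left\vert f^{\prime\prime\prime}(b)\right\vert^{q}}{4}\right)^{1/q}$. Multiplying the three pieces together with the prefactor $(b-a)^{3}/12$ gives exactly the asserted inequality.

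The step I expect to require the most care is precisely checking the finiteness of the integrals against the weights $1/t$ and $1/(1-t)$: the Godunova--Levin bound inherently introduces these singularities at the endpoints, and they are controlled only because the kernel $t(1-t)(2t-1)$ of Lemma~\ref{lem 1.1} vanishes there. This cancellation is the structural reason the argument goes through for third derivatives in $Q(I)$; once it is observed, everything else reduces to elementary polynomial integration.
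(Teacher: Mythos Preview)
Your proposal is correct and follows essentially the same route as the paper: Lemma~\ref{lem 1.1}, then the power-mean (weighted H\"older) inequality with weight $t(1-t)\lvert 2t-1\rvert$, then the $Q(I)$ bound on $\lvert f'''\rvert^{q}$, followed by the same two elementary integrals $\int_{0}^{1}t(1-t)\lvert 2t-1\rvert\,dt=\tfrac{1}{16}$ and $\int_{0}^{1}t\lvert 2t-1\rvert\,dt=\int_{0}^{1}(1-t)\lvert 2t-1\rvert\,dt=\tfrac{1}{4}$. Your additional remark about the endpoint cancellation (the kernel's zeros absorbing the $1/t$ and $1/(1-t)$ singularities from the Godunova--Levin bound) is a nice clarification the paper leaves implicit.
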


\begin{proof}
From Lemma \ref{lem 1.1} and using the power mean inequality, we have%
\begin{eqnarray*}
&&\left\vert \frac{f(a)+f(b)}{2}-\frac{1}{b-a}\int_{a}^{b}f(x)dx-\frac{b-a}{%
12}\left[ f^{\prime }(b)-f^{\prime }(a)\right] \right\vert \\
&\leq &\frac{\left( b-a\right) ^{3}}{12}\left( \int_{0}^{1}t(1-t)\left\vert
2t-1\right\vert dt\right) ^{1-\frac{1}{q}}\left(
\int_{0}^{1}t(1-t)\left\vert 2t-1\right\vert \left\vert f^{\prime \prime
\prime }\left( ta+(1-t\right) b)\right\vert ^{q}dt\right) ^{\frac{1}{q}}.
\end{eqnarray*}%
Then, since $\left\vert f^{\prime \prime \prime }\right\vert ^{q}$ belongs
to $Q(I)$, we can write for $t\in (0,1)$%
\begin{equation*}
\left\vert f^{\prime \prime \prime }\left( ta+(1-t\right) b)\right\vert
^{q}\leq \frac{\left\vert f^{\prime \prime \prime }(a)\right\vert ^{q}}{t}+%
\frac{\left\vert f^{\prime \prime \prime }(b)\right\vert ^{q}}{1-t}.
\end{equation*}%
Hence,%
\begin{eqnarray*}
&&\left\vert \frac{f(a)+f(b)}{2}-\frac{1}{b-a}\int_{a}^{b}f(x)dx-\frac{b-a}{%
12}\left[ f^{\prime }(b)-f^{\prime }(a)\right] \right\vert \\
&\leq &\frac{\left( b-a\right) ^{3}}{12}\left( \frac{1}{16}\right) ^{1-\frac{%
1}{q}}\left( \int_{0}^{1}t(1-t)\left\vert 2t-1\right\vert \left[ \frac{%
\left\vert f^{\prime \prime \prime }(a)\right\vert ^{q}}{t}+\frac{\left\vert
f^{\prime \prime \prime }(b)\right\vert ^{q}}{1-t}\right] dt\right) ^{\frac{1%
}{q}} \\
&=&\frac{\left( b-a\right) ^{3}}{12}\left( \frac{1}{16}\right) ^{1-\frac{1}{q%
}}\left( \int_{0}^{1}\left[ (1-t)\left\vert 2t-1\right\vert \left\vert
f^{\prime \prime \prime }(a)\right\vert ^{q}+t\left\vert 2t-1\right\vert
\left\vert f^{\prime \prime \prime }(b)\right\vert ^{q}\right] dt\right) ^{%
\frac{1}{q}} \\
&=&\frac{\left( b-a\right) ^{3}}{12}\left( \frac{1}{16}\right) ^{1-\frac{1}{q%
}}\left[ \frac{\left\vert f^{\prime \prime \prime }(a)\right\vert
^{q}+\left\vert f^{\prime \prime \prime }(b)\right\vert ^{q}}{4}\right] ^{%
\frac{1}{q}},
\end{eqnarray*}%
where%
\begin{equation*}
\int_{0}^{1}t(1-t)\left\vert 2t-1\right\vert dt=\frac{1}{16}
\end{equation*}%
and%
\begin{equation*}
\int_{0}^{1}t\left\vert 2t-1\right\vert dt=\int_{0}^{1}(1-t)\left\vert
2t-1\right\vert dt=\frac{1}{4}.
\end{equation*}%
The proof is completed.
\end{proof}

\begin{corollary}
\label{co 1.1} In Theorem \ref{teo 2.1}, if we choose $q=1$ we obtain the
following inequality%
\begin{eqnarray*}
&&\left\vert \frac{f(a)+f(b)}{2}-\frac{1}{b-a}\int_{a}^{b}f(x)dx-\frac{b-a}{%
12}\left[ f^{\prime }(b)-f^{\prime }(a)\right] \right\vert \\
&\leq &\frac{\left( b-a\right) ^{3}}{48}\left[ \left\vert f^{\prime \prime
\prime }(a)\right\vert +\left\vert f^{\prime \prime \prime }(b)\right\vert %
\right] .
\end{eqnarray*}
\end{corollary}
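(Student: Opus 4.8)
The plan is to obtain Corollary \ref{co 1.1} by simply specializing Theorem \ref{teo 2.1} to $q=1$. With $q=1$ the exponent $1-\frac1q$ vanishes, so the factor $\left(\frac{1}{16}\right)^{1-\frac1q}$ becomes $1$, while the bracketed term $\left[\frac{|f'''(a)|^q+|f'''(b)|^q}{4}\right]^{1/q}$ collapses to $\frac{|f'''(a)|+|f'''(b)|}{4}$. Multiplying through by $\frac{(b-a)^3}{12}$ gives $\frac{(b-a)^3}{48}\left[|f'''(a)|+|f'''(b)|\right]$, which is exactly the asserted bound. So at the level of the paper this is a one-line deduction.

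If one instead wants a self-contained derivation that bypasses the power-mean step, I would start from Lemma \ref{lem 1.1}, pass to absolute values, and move the absolute value inside the integral to get
\[
\left|\frac{f(a)+f(b)}{2}-\frac{1}{b-a}\int_a^b f(x)\,dx-\frac{b-a}{12}\bigl[f'(b)-f'(a)\bigr]\right|\le \frac{(b-a)^3}{12}\int_0^1 t(1-t)\,|2t-1|\,\bigl|f'''(ta+(1-t)b)\bigr|\,dt .
\]
Since $|f'''|^q\in Q(I)$ with $q=1$ just means $|f'''|\in Q(I)$, the defining inequality of Definition \ref{def 1.1} yields $|f'''(ta+(1-t)b)|\le \frac{|f'''(a)|}{t}+\frac{|f'''(b)|}{1-t}$ for $t\in(0,1)$. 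The factor $t(1-t)$ cancels the denominators, leaving
\[
\int_0^1\Bigl[(1-t)\,|2t-1|\,|f'''(a)|+t\,|2t-1|\,|f'''(b)|\Bigr]\,dt .
\]

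The only genuine computation is the elementary integral $\int_0^1 t\,|2t-1|\,dt=\int_0^1 (1-t)\,|2t-1|\,dt=\frac14$, obtained by splitting at $t=\tfrac12$; this produces $\frac{|f'''(a)|+|f'''(b)|}{4}$ and hence the constant $\frac{(b-a)^3}{48}$. There is essentially no obstacle: the statement is a direct corollary, and the only point worth noting is that the hypothesis $|f'''|^q\in Q(I)$ at $q=1$ is literally the hypothesis $|f'''|\in Q(I)$, so no extra regularity beyond that of Theorem \ref{teo 2.1} is required.
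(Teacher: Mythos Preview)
Your proposal is correct and matches the paper's approach: the paper states this corollary without separate proof, since it is precisely the specialization $q=1$ of Theorem~\ref{teo 2.1}, and your arithmetic $(1/16)^{0}=1$, $\bigl[\tfrac{|f'''(a)|+|f'''(b)|}{4}\bigr]^{1}$, $\tfrac{1}{12}\cdot\tfrac{1}{4}=\tfrac{1}{48}$ is exactly what is intended. Your optional self-contained derivation simply replays the proof of Theorem~\ref{teo 2.1} at $q=1$ (where the power-mean step is vacuous), and the integral $\int_0^1 t\,|2t-1|\,dt=\tfrac14$ is the same one computed there.
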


\begin{theorem}
\label{teo 2.2} Let $f^{\prime \prime }:I\subseteq 
\mathbb{R}
\rightarrow 
\mathbb{R}
$ be an absolutely continuous function on $I^{\circ }$ such that $f^{\prime
\prime \prime }\in L[a,b],$ where $a,b\in I$, $a<b.$ If $\left\vert
f^{\prime \prime \prime }\right\vert ^{q}$ belongs to $Q(I)$ and $q>1,$ then
the following inequality holds%
\begin{eqnarray*}
&&\left\vert \frac{f(a)+f(b)}{2}-\frac{1}{b-a}\int_{a}^{b}f(x)dx-\frac{b-a}{%
12}\left[ f^{\prime }(b)-f^{\prime }(a)\right] \right\vert \\
&\leq &\frac{\left( b-a\right) ^{3}}{12}\frac{\left( \beta \left(
q,q+1\right) \right) ^{\frac{1}{q}}}{\left( p+1\right) ^{\frac{1}{p}}}\left[
\left\vert f^{\prime \prime \prime }(a)\right\vert +\left\vert f^{\prime
\prime \prime }(b)\right\vert \right] ^{\frac{1}{q}}
\end{eqnarray*}%
where $\frac{1}{p}+\frac{1}{q}=1,$ and $\beta \left( ,\right) $ is Euler
Beta function.
\end{theorem}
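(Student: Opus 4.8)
The plan is to start from Lemma~\ref{lem 1.1}, which gives an exact identity expressing the left-hand side as $\frac{(b-a)^3}{12}\int_0^1 t(1-t)|2t-1|\,|f'''(ta+(1-t)b)|\,dt$ after taking absolute values inside the integral. Since $q>1$, the natural tool is H\"older's inequality with exponents $p,q$ satisfying $\frac1p+\frac1q=1$, applied by splitting the integrand as $|2t-1|$ against $t(1-t)|f'''(ta+(1-t)b)|$, or more cleanly as $|2t-1|^{?}\cdot\big(t(1-t)\big)^{1/q}|f'''|$ — the exact split must be chosen so that the resulting integrals match the Beta function and the $(p+1)^{1/p}$ factor that appear in the statement. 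Concretely I would write
\begin{equation*}
\int_0^1 t(1-t)|2t-1|\,|f'''(ta+(1-t)b)|\,dt \le \left(\int_0^1 |2t-1|^p\,dt\right)^{\frac1p}\left(\int_0^1 \big(t(1-t)\big)^q |f'''(ta+(1-t)b)|^q\,dt\right)^{\frac1q}.
\end{equation*}

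The first factor is elementary: $\int_0^1 |2t-1|^p\,dt = \frac{1}{p+1}$, giving the $(p+1)^{-1/p}$ in the bound. For the second factor I would invoke the hypothesis $|f'''|^q\in Q(I)$, so that for $t\in(0,1)$,
\begin{equation*}
|f'''(ta+(1-t)b)|^q \le \frac{|f'''(a)|^q}{t}+\frac{|f'''(b)|^q}{1-t}.
\end{equation*}
Multiplying by $\big(t(1-t)\big)^q$ and integrating term by term produces $\int_0^1 t^{q-1}(1-t)^q\,dt\cdot|f'''(a)|^q + \int_0^1 t^q(1-t)^{q-1}\,dt\cdot|f'''(b)|^q$, and both of these integrals equal $\beta(q,q+1)$ by symmetry and the definition of the Euler Beta function. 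Hence the second factor is $\big(\beta(q,q+1)\big)^{1/q}\big[|f'''(a)|^q+|f'''(b)|^q\big]^{1/q}$. Collecting everything and multiplying by $\frac{(b-a)^3}{12}$ yields exactly the claimed inequality.

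The main obstacle — really the only place where care is needed — is the bookkeeping of which power of $t(1-t)$ and $|2t-1|$ goes into which H\"older factor, since several natural splits give the same first integral $\frac{1}{p+1}$ but different Beta-function parameters; one must match the split to the stated exponents $\beta(q,q+1)$. A secondary point is checking the integrability near $t=0$ and $t=1$ after applying the $Q(I)$ bound: the worst term behaves like $t^{q-1}$ near $0$ (integrable since $q>1$, so $q-1>0$), confirming that the formal term-by-term integration is legitimate. Everything else is the elementary Beta integral evaluation $\beta(q,q+1)=\int_0^1 t^{q-1}(1-t)^q\,dt$, which I would not belabor.
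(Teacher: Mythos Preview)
Your proposal is correct and follows exactly the paper's approach: Lemma~\ref{lem 1.1}, H\"older with the split $|2t-1|$ into the $p$-factor and $t(1-t)|f'''|$ into the $q$-factor, then the $Q(I)$ bound reducing the second integral to $\beta(q,q+1)\big(|f'''(a)|^q+|f'''(b)|^q\big)$. Note that your computation (and the paper's own proof) produces $\big[|f'''(a)|^q+|f'''(b)|^q\big]^{1/q}$, whereas the theorem as stated omits the inner exponents $q$; this is a typo in the statement, not a flaw in your argument.
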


\begin{proof}
Since $\left\vert f^{\prime \prime \prime }\right\vert ^{q}$ belongs to $%
Q(I),$ from Lemma \ref{lem 1.1} and using the H\"{o}lder inequality we have%
\begin{eqnarray*}
&&\left\vert \frac{f(a)+f(b)}{2}-\frac{1}{b-a}\int_{a}^{b}f(x)dx-\frac{b-a}{%
12}\left[ f^{\prime }(b)-f^{\prime }(a)\right] \right\vert \\
&\leq &\frac{\left( b-a\right) ^{3}}{12}\left( \int_{0}^{1}\left\vert
2t-1\right\vert ^{p}dt\right) ^{\frac{1}{p}}\left(
\int_{0}^{1}t^{q}(1-t)^{q}\left\vert f^{\prime \prime \prime }\left(
ta+(1-t\right) b)\right\vert ^{q}dt\right) ^{\frac{1}{q}} \\
&\leq &\frac{\left( b-a\right) ^{3}}{12}\left( \frac{1}{p+1}\right) ^{\frac{1%
}{p}}\left( \int_{0}^{1}t^{q}(1-t)^{q}\left[ \frac{\left\vert f^{\prime
\prime \prime }(a)\right\vert ^{q}}{t}+\frac{\left\vert f^{\prime \prime
\prime }(b)\right\vert ^{q}}{1-t}\right] dt\right) ^{\frac{1}{q}} \\
&=&\frac{\left( b-a\right) ^{3}}{12}\frac{\left( \beta \left( q,q+1\right)
\right) ^{\frac{1}{q}}}{\left( p+1\right) ^{\frac{1}{p}}}\left[ \left\vert
f^{\prime \prime \prime }(a)\right\vert ^{q}+\left\vert f^{\prime \prime
\prime }(b)\right\vert ^{q}\right] ^{\frac{1}{q}},
\end{eqnarray*}%
which completes the proof.
\end{proof}

\begin{theorem}
\label{teo 2.3} Under the assumptions of Theorem \ref{teo 2.2}, we have the
following inequality%
\begin{eqnarray*}
&&\left\vert \frac{f(a)+f(b)}{2}-\frac{1}{b-a}\int_{a}^{b}f(x)dx-\frac{b-a}{%
12}\left[ f^{\prime }(b)-f^{\prime }(a)\right] \right\vert \\
&\leq &\frac{\left( b-a\right) ^{3}}{24}\left( \frac{1}{\left( p+1\right)
\left( p+3\right) }\right) ^{\frac{1}{p}}\left[ \left\vert f^{\prime \prime
\prime }(a)\right\vert ^{q}+\left\vert f^{\prime \prime \prime
}(b)\right\vert ^{q}\right] ^{\frac{1}{q}}
\end{eqnarray*}%
where $\frac{1}{p}+\frac{1}{q}=1.$
\end{theorem}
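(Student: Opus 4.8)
The plan is to follow the pattern of the proof of Theorem \ref{teo 2.2}: start from Lemma \ref{lem 1.1}, then apply the H\"{o}lder inequality together with the definition of $Q(I)$, the only novelty being how the weight $t(1-t)\left\vert 2t-1\right\vert $ is split between the two H\"{o}lder factors. By Lemma \ref{lem 1.1},
\begin{equation*}
\left\vert \frac{f(a)+f(b)}{2}-\frac{1}{b-a}\int_{a}^{b}f(x)dx-\frac{b-a}{12}\left[ f^{\prime }(b)-f^{\prime }(a)\right] \right\vert \leq \frac{\left( b-a\right) ^{3}}{12}\int_{0}^{1}t(1-t)\left\vert 2t-1\right\vert \left\vert f^{\prime \prime \prime }\left( ta+(1-t)b\right) \right\vert dt.
\end{equation*}

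First I would write the integrand as $\left( \left\vert 2t-1\right\vert \left( t(1-t)\right) ^{1/p}\right) \left( \left( t(1-t)\right) ^{1/q}\left\vert f^{\prime \prime \prime }\left( ta+(1-t)b\right) \right\vert \right) $; since $\frac{1}{p}+\frac{1}{q}=1$ the two powers of $t(1-t)$ recombine into $t(1-t)$, and H\"{o}lder's inequality with exponents $p$ and $q$ yields the bound
\begin{equation*}
\frac{\left( b-a\right) ^{3}}{12}\left( \int_{0}^{1}\left\vert 2t-1\right\vert ^{p}t(1-t)dt\right) ^{\frac{1}{p}}\left( \int_{0}^{1}t(1-t)\left\vert f^{\prime \prime \prime }\left( ta+(1-t)b\right) \right\vert ^{q}dt\right) ^{\frac{1}{q}}.
\end{equation*}
Then, because $\left\vert f^{\prime \prime \prime }\right\vert ^{q}\in Q(I)$, I would use $\left\vert f^{\prime \prime \prime }\left( ta+(1-t)b\right) \right\vert ^{q}\leq \frac{\left\vert f^{\prime \prime \prime }(a)\right\vert ^{q}}{t}+\frac{\left\vert f^{\prime \prime \prime }(b)\right\vert ^{q}}{1-t}$; the factor $t(1-t)$ cancels both singularities, so $\int_{0}^{1}t(1-t)\left\vert f^{\prime \prime \prime }\left( ta+(1-t)b\right) \right\vert ^{q}dt\leq \int_{0}^{1}\left[ (1-t)\left\vert f^{\prime \prime \prime }(a)\right\vert ^{q}+t\left\vert f^{\prime \prime \prime }(b)\right\vert ^{q}\right] dt=\frac{1}{2}\left[ \left\vert f^{\prime \prime \prime }(a)\right\vert ^{q}+\left\vert f^{\prime \prime \prime }(b)\right\vert ^{q}\right] $.

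The one step requiring care is the constant $\int_{0}^{1}\left\vert 2t-1\right\vert ^{p}t(1-t)dt$. The integrand is symmetric about $t=\frac{1}{2}$, so this equals $2\int_{0}^{1/2}(1-2t)^{p}t(1-t)dt$, and the substitution $u=1-2t$ reduces it to $\frac{1}{4}\int_{0}^{1}u^{p}(1-u^{2})du=\frac{1}{4}\left( \frac{1}{p+1}-\frac{1}{p+3}\right) =\frac{1}{2(p+1)(p+3)}$. Substituting this and the previous estimate into the displayed bound and simplifying the constant via $2^{-1/p}2^{-1/q}=\frac{1}{2}$ gives
\begin{equation*}
\frac{\left( b-a\right) ^{3}}{12}\left( \frac{1}{2(p+1)(p+3)}\right) ^{\frac{1}{p}}\left( \frac{1}{2}\right) ^{\frac{1}{q}}\left[ \left\vert f^{\prime \prime \prime }(a)\right\vert ^{q}+\left\vert f^{\prime \prime \prime }(b)\right\vert ^{q}\right] ^{\frac{1}{q}}=\frac{\left( b-a\right) ^{3}}{24}\left( \frac{1}{(p+1)(p+3)}\right) ^{\frac{1}{p}}\left[ \left\vert f^{\prime \prime \prime }(a)\right\vert ^{q}+\left\vert f^{\prime \prime \prime }(b)\right\vert ^{q}\right] ^{\frac{1}{q}},
\end{equation*}
which is exactly the asserted inequality. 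Every step other than the elementary integral above is routine, so I expect that computation — and correctly tracking the powers of $2$ — to be the only real obstacle.
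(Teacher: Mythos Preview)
Your proof is correct and follows essentially the same route as the paper: start from Lemma~\ref{lem 1.1}, apply H\"{o}lder with the weight split $t(1-t)=\bigl(t(1-t)\bigr)^{1/p}\bigl(t(1-t)\bigr)^{1/q}$, use the $Q(I)$ bound on $\left\vert f^{\prime\prime\prime}\right\vert^{q}$, and evaluate $\int_{0}^{1}t(1-t)\left\vert 2t-1\right\vert^{p}dt=\frac{1}{2(p+1)(p+3)}$. Your explicit substitution for that integral and the bookkeeping $2^{-1/p}2^{-1/q}=\tfrac{1}{2}$ are exactly the details the paper leaves implicit.
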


\begin{proof}
Since $\left\vert f^{\prime \prime \prime }\right\vert ^{q}$ belongs to $%
Q(I),$ from Lemma \ref{lem 1.1} and using the H\"{o}lder inequality we have%
\begin{eqnarray*}
&&\left\vert \frac{f(a)+f(b)}{2}-\frac{1}{b-a}\int_{a}^{b}f(x)dx-\frac{b-a}{%
12}\left[ f^{\prime }(b)-f^{\prime }(a)\right] \right\vert \\
&\leq &\frac{\left( b-a\right) ^{3}}{12}\left( \int_{0}^{1}t\left(
1-t\right) \left\vert 2t-1\right\vert ^{p}dt\right) ^{\frac{1}{p}}\left(
\int_{0}^{1}t(1-t)\left\vert f^{\prime \prime \prime }\left( ta+(1-t\right)
b)\right\vert ^{q}dt\right) ^{\frac{1}{q}} \\
&\leq &\frac{\left( b-a\right) ^{3}}{12}\left( \int_{0}^{1}t\left(
1-t\right) \left\vert 2t-1\right\vert ^{p}dt\right) ^{\frac{1}{p}}\left(
\int_{0}^{1}t(1-t)\left[ \frac{\left\vert f^{\prime \prime \prime
}(a)\right\vert ^{q}}{t}+\frac{\left\vert f^{\prime \prime \prime
}(b)\right\vert ^{q}}{1-t}\right] dt\right) ^{\frac{1}{q}} \\
&=&\frac{\left( b-a\right) ^{3}}{12}\left( \frac{1}{2\left( p+1\right)
\left( p+3\right) }\right) ^{\frac{1}{p}}\left[ \frac{\left\vert f^{\prime
\prime \prime }(a)\right\vert ^{q}+\left\vert f^{\prime \prime \prime
}(b)\right\vert ^{q}}{2}\right] ^{\frac{1}{q}},
\end{eqnarray*}%
where we used 
\begin{equation*}
\int_{0}^{1}t\left( 1-t\right) \left\vert 2t-1\right\vert ^{p}dt=\frac{1}{%
2\left( p+1\right) \left( p+3\right) }.
\end{equation*}%
The proof is completed.
\end{proof}

Following result is obtained via Lemma \ref{lem 1.2}.

\begin{theorem}
\label{teo 2.4} Let $f^{\prime \prime }:I\subseteq 
\mathbb{R}
\rightarrow 
\mathbb{R}
$ be an absolutely continuous function on $I^{\circ }$ such that $f^{\prime
\prime \prime }\in L[a,b],$ where $a,b\in I$, $a<b.$ If $\left\vert
f^{\prime \prime \prime }\right\vert ^{q}$ belongs to $Q(I),$ then the
following inequality holds%
\begin{eqnarray*}
&&\left\vert \int_{a}^{b}f(x)dx-\frac{b-a}{6}\left[ f(a)+4f\left( \frac{a+b}{%
2}\right) +f(b)\right] \right\vert \\
&\leq &\frac{\left( b-a\right) ^{4}}{6}\left( \frac{1}{192}\right) ^{1-\frac{%
1}{q}}\left\{ \left( \frac{\left\vert f^{\prime \prime \prime
}(a)\right\vert ^{q}}{48}+\left( \frac{17}{48}-\frac{1}{2}\ln 2\right)
\left\vert f^{\prime \prime \prime }(b)\right\vert ^{q}\right) ^{\frac{1}{q}%
}\right. \\
&&\left. +\left( \left( \frac{17}{48}-\frac{1}{2}\ln 2\right) \left\vert
f^{\prime \prime \prime }(a)\right\vert ^{q}+\frac{\left\vert f^{\prime
\prime \prime }(b)\right\vert ^{q}}{48}\right) ^{\frac{1}{q}}\right\}
\end{eqnarray*}%
for $q\geq 1.$
\end{theorem}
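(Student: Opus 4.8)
The plan is to follow the pattern of the proofs of Theorems~\ref{teo 2.1}--\ref{teo 2.3}, but starting from Lemma~\ref{lem 1.2} and exploiting the fact that the kernel $p(t)$ keeps a constant sign on each of $\left[ 0,\frac{1}{2}\right] $ and $\left( \frac{1}{2},1\right] $. Taking absolute values in Lemma~\ref{lem 1.2} and noting that $\left\vert p(t)\right\vert =\frac{1}{6}t^{2}\left( \frac{1}{2}-t\right) $ on $\left[ 0,\frac{1}{2}\right] $ and $\left\vert p(t)\right\vert =\frac{1}{6}\left( 1-t\right) ^{2}\left( t-\frac{1}{2}\right) $ on $\left( \frac{1}{2},1\right] $, I would first obtain
\begin{eqnarray*}
&&\left\vert \int_{a}^{b}f(x)dx-\frac{b-a}{6}\left[ f(a)+4f\left( \frac{a+b}{2}\right) +f(b)\right] \right\vert \\
&\leq &\frac{\left( b-a\right) ^{4}}{6}\left( \int_{0}^{1/2}t^{2}\left( \frac{1}{2}-t\right) \left\vert f^{\prime \prime \prime }\left( ta+(1-t)b\right) \right\vert dt+\int_{1/2}^{1}\left( 1-t\right) ^{2}\left( t-\frac{1}{2}\right) \left\vert f^{\prime \prime \prime }\left( ta+(1-t)b\right) \right\vert dt\right) .
\end{eqnarray*}

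Next, on each of the two intervals I would apply the power-mean inequality with the nonnegative weights $t^{2}\left( \frac{1}{2}-t\right) $, respectively $\left( 1-t\right) ^{2}\left( t-\frac{1}{2}\right) $; this is precisely where $q\geq 1$ is used, and since $\int_{0}^{1/2}t^{2}\left( \frac{1}{2}-t\right) dt=\int_{1/2}^{1}\left( 1-t\right) ^{2}\left( t-\frac{1}{2}\right) dt=\frac{1}{192}$ it produces the common prefactor $\left( \frac{1}{192}\right) ^{1-\frac{1}{q}}$ on both pieces. Then, using that $\left\vert f^{\prime \prime \prime }\right\vert ^{q}\in Q(I)$, I would insert $\left\vert f^{\prime \prime \prime }\left( ta+(1-t)b\right) \right\vert ^{q}\leq \frac{\left\vert f^{\prime \prime \prime }(a)\right\vert ^{q}}{t}+\frac{\left\vert f^{\prime \prime \prime }(b)\right\vert ^{q}}{1-t}$ for $t\in (0,1)$, which reduces the remaining $L^{q}$-type factors to elementary integrals. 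The integral over $\left[ \frac{1}{2},1\right] $ is then obtained from the one over $\left[ 0,\frac{1}{2}\right] $ by the substitution $t\mapsto 1-t$, which interchanges the roles of $a$ and $b$ and hence yields the second summand in the statement.

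What remains is the evaluation of three one-variable integrals: the two polynomial ones $\int_{0}^{1/2}t^{2}\left( \frac{1}{2}-t\right) dt=\frac{1}{192}$ and $\int_{0}^{1/2}t\left( \frac{1}{2}-t\right) dt=\frac{1}{48}$, and the one involving a logarithm,
\begin{equation*}
\int_{0}^{1/2}\frac{t^{2}\left( \frac{1}{2}-t\right) }{1-t}dt=\frac{17}{48}-\frac{1}{2}\ln 2,
\end{equation*}
which I would compute via the polynomial division $\frac{t^{2}\left( \frac{1}{2}-t\right) }{1-t}=t^{2}+\frac{1}{2}t+\frac{1}{2}-\frac{1}{2\left( 1-t\right) }$ followed by term-by-term integration; the term $-\frac{1}{2\left( 1-t\right) }$ is the source of the $\frac{1}{2}\ln 2$ appearing in the theorem. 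I expect this division/integration to be the only genuinely new computational point, everything else being the $Q(I)$ estimate and power-mean inequality already used above. Assembling the two pieces — the $\left[ 0,\frac{1}{2}\right] $ piece contributing $\left( \frac{\left\vert f^{\prime \prime \prime }(a)\right\vert ^{q}}{48}+\left( \frac{17}{48}-\frac{1}{2}\ln 2\right) \left\vert f^{\prime \prime \prime }(b)\right\vert ^{q}\right) ^{1/q}$ and the $\left[ \frac{1}{2},1\right] $ piece contributing the symmetric expression — with the prefactor $\frac{\left( b-a\right) ^{4}}{6}\left( \frac{1}{192}\right) ^{1-\frac{1}{q}}$ gives exactly the claimed inequality.
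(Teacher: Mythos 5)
Your proposal is correct and follows essentially the same route as the paper's own proof: split at $t=\frac{1}{2}$, apply the power-mean inequality with weight $\left\vert p(t)\right\vert$ on each half (giving the factor $\left( \frac{1}{192}\right) ^{1-\frac{1}{q}}$), insert the $Q(I)$ bound $\left\vert f^{\prime \prime \prime }\left( ta+(1-t)b\right) \right\vert ^{q}\leq \frac{\left\vert f^{\prime \prime \prime }(a)\right\vert ^{q}}{t}+\frac{\left\vert f^{\prime \prime \prime }(b)\right\vert ^{q}}{1-t}$, and evaluate $\int_{0}^{1/2}t\left( \frac{1}{2}-t\right) dt=\frac{1}{48}$ and $\int_{0}^{1/2}\frac{t^{2}\left( \frac{1}{2}-t\right) }{1-t}dt=\frac{17}{48}-\frac{1}{2}\ln 2$. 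Your computations check out, and the only cosmetic difference is that you obtain the second summand by the substitution $t\mapsto 1-t$ while the paper evaluates both integrals directly.
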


\begin{proof}
Since $\left\vert f^{\prime \prime \prime }\right\vert ^{q}$ belongs to $%
Q(I),$ from Lemma \ref{lem 1.2} and using the power mean inequality we have%
\begin{eqnarray}
&&  \label{1} \\
&&\left\vert \int_{a}^{b}f(x)dx-\frac{b-a}{6}\left[ f(a)+4f\left( \frac{a+b}{%
2}\right) +f(b)\right] \right\vert  \notag \\
&\leq &\frac{\left( b-a\right) ^{4}}{6}\left\{ \left( \int_{0}^{\frac{1}{2}%
}t^{2}\left( \frac{1}{2}-t\right) dt\right) ^{1-\frac{1}{q}}\left( \int_{0}^{%
\frac{1}{2}}t^{2}\left( \frac{1}{2}-t\right) \left\vert f^{\prime \prime
\prime }\left( ta+(1-t\right) b)\right\vert ^{q}dt\right) ^{\frac{1}{q}%
}\right.  \notag \\
&&\left. \left( \int_{\frac{1}{2}}^{1}\left( t-1\right) ^{2}\left( t-\frac{1%
}{2}\right) dt\right) ^{1-\frac{1}{q}}\left( \int_{\frac{1}{2}}^{1}\left(
t-1\right) ^{2}\left( t-\frac{1}{2}\right) \left\vert f^{\prime \prime
\prime }\left( ta+(1-t\right) b)\right\vert ^{q}dt\right) ^{\frac{1}{q}%
}\right\}  \notag \\
&\leq &\frac{\left( b-a\right) ^{4}}{6}\left( \frac{1}{192}\right) ^{1-\frac{%
1}{q}}\left\{ \left( \int_{0}^{\frac{1}{2}}t^{2}\left( \frac{1}{2}-t\right) %
\left[ \frac{\left\vert f^{\prime \prime \prime }(a)\right\vert ^{q}}{t}+%
\frac{\left\vert f^{\prime \prime \prime }(b)\right\vert ^{q}}{1-t}\right]
dt\right) ^{\frac{1}{q}}\right.  \notag \\
&&\left. \left( \int_{\frac{1}{2}}^{1}\left( t-1\right) ^{2}\left( t-\frac{1%
}{2}\right) \left[ \frac{\left\vert f^{\prime \prime \prime }(a)\right\vert
^{q}}{t}+\frac{\left\vert f^{\prime \prime \prime }(b)\right\vert ^{q}}{1-t}%
\right] dt\right) ^{\frac{1}{q}}\right\} .  \notag
\end{eqnarray}

If we use the inequalities belowe in (\ref{1}), we get the desired result:%
\begin{equation*}
\int_{0}^{\frac{1}{2}}t^{2}\left( \frac{1}{2}-t\right) dt=\int_{\frac{1}{2}%
}^{1}\left( t-1\right) ^{2}\left( t-\frac{1}{2}\right) dt=\frac{1}{192}
\end{equation*}%
and%
\begin{eqnarray*}
\int_{0}^{\frac{1}{2}}t^{2}\left( \frac{1}{2}-t\right) \frac{1}{1-t}dt
&=&\int_{\frac{1}{2}}^{1}\frac{1}{t}\left( t-1\right) ^{2}\left( t-\frac{1}{2%
}\right) dt \\
&=&\frac{17}{48}-\frac{1}{2}\ln 2.
\end{eqnarray*}
\end{proof}

\begin{corollary}
\label{co 1.2} In Theorem \ref{teo 2.4}, if we choose $q=1$ we obtain the
following inequality%
\begin{eqnarray*}
&&\left\vert \int_{a}^{b}f(x)dx-\frac{b-a}{6}\left[ f(a)+4f\left( \frac{a+b}{%
2}\right) +f(b)\right] \right\vert \\
&\leq &\frac{\left( b-a\right) ^{4}}{6}\left( \frac{3}{8}-\frac{1}{2}\ln
2\right) \left[ \left\vert f^{\prime \prime \prime }(a)\right\vert
+\left\vert f^{\prime \prime \prime }(b)\right\vert \right] .
\end{eqnarray*}
\end{corollary}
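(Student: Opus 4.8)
The plan is to obtain this corollary as the direct specialization $q=1$ of Theorem~\ref{teo 2.4}, so no fresh analytic input is needed---only careful bookkeeping of the constants.

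First I would set $q=1$ in the conclusion of Theorem~\ref{teo 2.4}. The exponent $1-\frac{1}{q}$ then equals $0$, so the factor $\left( \frac{1}{192}\right) ^{1-\frac{1}{q}}$ collapses to $1$ and drops out. Likewise, each of the two outer exponents $\frac{1}{q}$ becomes $1$, so the two terms inside the braces lose their roots and become the linear expressions $\frac{\left\vert f^{\prime \prime \prime }(a)\right\vert }{48}+\left( \frac{17}{48}-\frac{1}{2}\ln 2\right) \left\vert f^{\prime \prime \prime }(b)\right\vert$ and $\left( \frac{17}{48}-\frac{1}{2}\ln 2\right) \left\vert f^{\prime \prime \prime }(a)\right\vert +\frac{\left\vert f^{\prime \prime \prime }(b)\right\vert }{48}$.

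Next I would add these two expressions. By the symmetry $a\leftrightarrow b$ the coefficient of $\left\vert f^{\prime \prime \prime }(a)\right\vert$ equals that of $\left\vert f^{\prime \prime \prime }(b)\right\vert$, and it is $\frac{1}{48}+\frac{17}{48}-\frac{1}{2}\ln 2=\frac{18}{48}-\frac{1}{2}\ln 2=\frac{3}{8}-\frac{1}{2}\ln 2$. Pulling out this common coefficient and retaining the prefactor $\frac{(b-a)^{4}}{6}$ from Theorem~\ref{teo 2.4} yields precisely the claimed bound $\frac{(b-a)^{4}}{6}\left( \frac{3}{8}-\frac{1}{2}\ln 2\right) \left[ \left\vert f^{\prime \prime \prime }(a)\right\vert +\left\vert f^{\prime \prime \prime }(b)\right\vert \right]$.

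I expect no real obstacle here: the power-mean step used in Theorem~\ref{teo 2.4} is valid for all $q\geq 1$, and at $q=1$ it is trivial, so the specialization is legitimate; the only things to watch are the $q\to 1$ degeneration of the exponents and the elementary fraction arithmetic $\frac{1}{48}+\frac{17}{48}=\frac{3}{8}$. As an alternative I could prove the estimate from scratch directly from Lemma~\ref{lem 1.2}, applying $\left\vert f^{\prime \prime \prime }(ta+(1-t)b)\right\vert \leq \frac{\left\vert f^{\prime \prime \prime }(a)\right\vert }{t}+\frac{\left\vert f^{\prime \prime \prime }(b)\right\vert }{1-t}$ under the integral and evaluating $\int_{0}^{1/2}t^{2}\left( \frac{1}{2}-t\right) dt=\int_{1/2}^{1}\left( t-1\right) ^{2}\left( t-\frac{1}{2}\right) dt=\frac{1}{192}$ together with $\int_{0}^{1/2}t^{2}\left( \frac{1}{2}-t\right) \frac{dt}{1-t}=\frac{17}{48}-\frac{1}{2}\ln 2$, but the substitution route is the shortest.
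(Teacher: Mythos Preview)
Your proposal is correct and follows exactly the paper's approach: the corollary is obtained by direct substitution $q=1$ into Theorem~\ref{teo 2.4}, and your arithmetic $\frac{1}{48}+\frac{17}{48}-\frac{1}{2}\ln 2=\frac{3}{8}-\frac{1}{2}\ln 2$ is correct.
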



\begin{thebibliography}{9}
\bibitem{GL} E. K. Godunova and V. I. Levin, `Inequalities for functions of
a broad class that contains convex, monotone and some other forms of
functions' in: Numerical Mathematics and Mathematical Physics (Moskov. Gos.
Ped. Inst, Moscow, 1985), pp. 138--142, 166 (in Russian).

\bibitem{MK} M. S. Moslehian and M. Kian, Jensen type inequalities for $Q-$%
class functions, Bull. Aust. Math. Soc. 85 (2012), 128--142,
doi:10.1017/S0004972711002863.

\bibitem{CQ} L. Chun and F. Qi, Integral inequalities for Hermite-Hadamard
type for functions whose $3$rd derivatives are $s-$convex, Applied
Mathematics, 3 (2012), 1680-1885.

\bibitem{AH} M. Alomari and S. Hussain, Two inequalities of Simpson type for 
$quasi-$convex functions and applications, Applied Mathematics E-Notes, 11
(2011), 110-117.
\end{thebibliography}
\end{document}